\newtheorem{theorem}{Theorem}
\newtheorem{lemma}[theorem]{Lemma}
\newtheorem{corollary}[theorem]{Corollary}
\begin{document}

\title{Knots with small lattice stick numbers}
\author[Y. Huh]{Youngsik Huh}
\address{Department of Mathematics, College of Natural Sciences,
Hanyang University, Seoul 133-791, Korea}
\email{yshuh@hanyang.ac.kr}
\author[S. Oh]{Seungsang Oh}
\address{Department of Mathematics, Korea University,
Anam-dong, Sungbuk-ku, Seoul 136-701, Korea}
\email{seungsang@korea.ac.kr}

\thanks{PACS numbers: 02.10.Kn, 82.35.Pq, 02.40.Sf}
\thanks{2000 Mathematics Subject Classification: 57M25, 57M27}
\thanks{This work was supported by the Korea Science and
Engineering Foundation (KOSEF) grant funded by the Korea
government (MOST) (No.~R01-2007-000-20293-0).}

\begin{abstract}
The lattice stick number of a knot type is defined to be the minimal number of straight line segments
required to construct a polygon presentation of the knot type in the cubic lattice.
In this paper, we mathematically prove that the trefoil knot $3_1$
and the figure-8 knot $4_1$ are the only knot types of lattice stick number less than 15,
which verifies the result from previous numerical estimations on this quantity.
\end{abstract}

\maketitle

\section{Introduction} \label{sec:intro}
A circle embedded into the Euclidean 3-space $\mathbb{R}^3$ is called a {\em knot}.
Two knots $K$ and $K^{\prime}$ are said to be {\em ambient isotopic},
if there exists a continuous map $h:\mathbb{R}^3\times[0,1] \rightarrow \mathbb{R}^3$
such that the restriction of $h$ to each $t \in [0,1]$, $h_t:\mathbb{R}^3\times\{t\} \rightarrow \mathbb{R}^3$,
is a homeomorphism, $h_0$ is the identity map and $h_1(K_1)=K_2$,
to say roughly, $K_1$ can be deformed to $K_2$ without intersecting its strand.
The ambient isotopy class of a knot $K$ is called the {\em knot type} of $K$.
Especially if $K$ is ambient isotopic to another knot contained in a plane of $\mathbb{R}^3$,
then we say that $K$ is {\em trivial}.

A {\em polygonal knot\/} or {\em polygon} is a knot
which consists of line segments, called {\em sticks}.
A {\em lattice knot} is a polygon in the cubic lattice
$\mathbb{Z}^3=(\mathbb{R} \times \mathbb{Z} \times
\mathbb{Z}) \cup (\mathbb{Z} \times \mathbb{R} \times \mathbb{Z})
\cup (\mathbb{Z} \times \mathbb{Z} \times \mathbb{R})$.
A lattice knot of the knot type $3_1$ is depicted in Figure \ref{fig:1}-(a).
Figure \ref{fig:1}-(b) and (c) are showing lattice knots of the knot type $4_1$.

The polygon presentation of knots in $\mathbb{Z}^3$ has been considered to be a useful model
for simulating circular molecules, because it is simple and possesses a volume.
For founding studies on lattice knots the readers are referred to \cite{Ge, Ha1, Ha2, HW, Ke, Pi, SW}.
A quantity that we may naturally be interested on lattice knots is the minimum length necessary
to realize a knot type as a lattice knot which is called the {\em minimum step number} of the knot type.
This quantity is related to the minimum number of chemical components for molecular chains
such as DNA and proteins to be formed as a knot of a given knot type.
The minimum step number was numerically estimated for various knot types \cite{Hu, JP1, Sch}.
And such estimation was mathematically confirmed for some small knots.
Diao proved that the minimum step number of any non-trivial knot type is at least 24
and only $3_1$ can be realized with 24 steps \cite{D1}.
Also it was reported that the minimum step number of $4_1$ and $5_1$ are 30 and 34, respectively \cite{Sch}.

In this paper we deal with another quantity of lattice knots.
The {\em lattice stick number} $s_{L}(K)$ of a knot type $K$ is defined to be the minimal number of sticks
required to construct a lattice knot of the knot type.
We may say that this quantity corresponds to the total curvature of smooth knots
which was firstly studied by Milnor \cite{Mi}.
He showed that the total curvature of any smooth knot is at least $4\pi$, if its knot type is non-trivial.
For a polygon in $\mathbb{R}^3$ its total curvature can be defined to the summation of all angles
between every pair of adjacent sticks.
This quantity says how much the modeled polymer turn in space,
therefore is expected to have some connection with physical properties of molecular chains.
In \cite{Plu}, using numerical simulations,
the total curvature of polygons in $\mathbb{R}^3$ was scaled
as a function of the length of polygons for each knot type up to six crossings.
Also it was reported that the equilibrium length with respect to total curvature,
which appears to be correlated to physical properties of macromolecules,
can be considered to be one of characteristics of each knot type under the experiment.
In lattice knots the angle between any two adjacent sticks is $\frac{\pi}{2}$.
Hence for lattice knots the total curvature of any knot type $K$ can be clearly defined
to be $\frac{\pi}{2} s_{L}(K)$ with no necessity to consider the length.
Furthermore the restriction on the position of sticks establishes
some combinatorial arguments which may allow theoretical study on the quantity
other than computational simulations.

From the definition we easily know that the lattice stick number of trivial knot is 4.
Rensburg and Promislow proved that $s_L(K) \ge 12$ for any nontrivial knot $K$ \cite{JP2}.
Also they estimated the quantity for various knot types via the simulated annealing technique.
Another numerical estimation was performed in \cite{Sch}. And in \cite{HO},
it was mathematically proved that  $s_L(3_1) = 12$ and $s_L(K) \geq 14$
for any other non-trivial knot $K$ via a simple and elementary argument, called {\em properly leveledness}.

According to the results by estimation in \cite{JP2, Sch},
it can be conjectured that $s_L(K) \geq 15$ for any non-trivial knot $K$ except for $3_1$ and $4_1$.
In this paper, by extending the approach in \cite{HO}, we will verify the conjecture.

The following theorem is the main result of this paper.

\begin{theorem} \label{thm:1}
$s_L(K)=14$ if and only if $K$ is $4_1$.
\end{theorem}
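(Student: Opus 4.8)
\medskip

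\noindent\emph{Strategy of proof.}

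The ``if'' direction is immediate: the lattice knots of type $4_1$ exhibited in Figure~\ref{fig:1} use exactly $14$ sticks, so $s_L(4_1)\le 14$, while the lower bound $s_L(K)\ge 14$ for every nontrivial $K\neq 3_1$ proved in \cite{HO} gives $s_L(4_1)\ge 14$; hence $s_L(4_1)=14$. For the ``only if'' direction, since $s_L$ of the trivial knot is $4$, $s_L(3_1)=12$, and $s_L(K)\ge 14$ for all other nontrivial $K$ by \cite{HO}, it is enough to prove the following: every lattice knot $P$ with $14$ sticks whose knot type $K$ is neither trivial nor $3_1$ is $4_1$. So fix such a $P$.

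The first ingredient is an elementary \emph{leveling} observation. If $P$ has $m$ sticks parallel to a given coordinate axis, then cutting $P$ along those sticks leaves $m$ subarcs, each lying in a plane perpendicular to that axis and having constant coordinate along it; reading the cyclic sequence of these $m$ heights (consecutive ones distinct, since the joining sticks are nondegenerate), its number of local maxima is at most $\lfloor m/2\rfloor$, and this is an upper bound for the bridge number of $K$ for the associated height function. In particular, if $m\le 3$ then the bridge number of $K$ is at most $1$, forcing $K$ trivial, a contradiction; so $P$ has at least $4$ sticks in each of the three directions, and as the total is $14$ the multiset of the three stick-counts is $\{4,4,6\}$ or $\{4,5,5\}$. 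In either case one axis, say the $z$-axis, carries exactly $4$ sticks.

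Take the $z$-axis to be a $4$-stick direction. Leveling produces $4$ horizontal arcs $A_1,A_2,A_3,A_4$ at heights $\ell_1,\ell_2,\ell_3,\ell_4$, joined cyclically by the four vertical sticks; the cyclic sequence $(\ell_1,\ell_2,\ell_3,\ell_4)$ has one or two local maxima. One local maximum would make the $z$-height function on $K$ have a single maximum, forcing $K$ trivial; hence after relabeling $\ell_1>\ell_2<\ell_3>\ell_4<\ell_1$, which exhibits $P$ in standard four-plat form with bridges $A_1$ and $A_3$, so $K$ is a nontrivial $2$-bridge knot. It remains to bound the complexity of $K$. The four horizontal arcs together use only $14-4=10$ sticks, so each $A_i$ is a short axis-parallel staircase; after normalizing $P$ to properly leveled position in the sense of \cite{HO} (removing vertical slack and the backtracks of the staircases) and projecting to a generic plane, a case analysis over the ways $10$ horizontal sticks can be distributed among the four plat-arcs shows that the resulting diagram simplifies by Reidemeister moves to one with at most $4$ crossings. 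A nontrivial knot of crossing number at most $4$ is $3_1$ or $4_1$; since $K\neq 3_1$ by hypothesis, $K=4_1$.

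The heart of the argument — and the main obstacle — is the last step: converting ``at most $10$ horizontal sticks in a four-plat'' into the crossing bound. This needs a genuine, if elementary, case analysis, splitting according to whether the stick distribution is $\{4,4,6\}$ (two independent $4$-stick directions, which forces the arcs to be very short) or $\{4,5,5\}$ (a single $4$-stick direction), and in each subcase exhibiting explicit lattice isotopies that rule out the four-plats superficially presenting $5_1$ or $5_2$. This is precisely the kind of extension of the properly-leveled method of \cite{HO} announced in the introduction.
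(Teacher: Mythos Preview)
Your outline is sound and essentially parallel to the paper's architecture: reduce to a properly leveled irreducible $14$-stick polygon, use the lower bound of four sticks per direction to force the stick-count triple to be $(6,4,4)$ or $(5,5,4)$, and then run a case analysis on the planar projection to bound the crossing number by~$4$. Your bridge-number argument for the $\ge 4$ bound is a clean alternative to the paper's proper-leveledness lemmas, and the observation that $K$ is automatically $2$-bridge is pleasant, though by itself it buys nothing toward the crossing bound.

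The gap is that what you call ``a genuine, if elementary, case analysis'' is in fact the entire technical content of the paper, and it is not the kind of analysis your sketch suggests. The $(6,4,4)$ case is already handled in \cite{HO}; all the new work is the $(5,5,4)$ case, and it is \emph{not} organized as a distribution of $10$ horizontal sticks among the four plat arcs followed by ad hoc isotopies ruling out $5_1$ and $5_2$. Instead, the paper exploits that the two boundary-$z$-level arcs $P_1,P_4$ each have at most two sticks (Lemma~\ref{lem:1}(3)), so the substantive combinatorics lives in the projection $p(P_2\cup P_3)$ alone. It then proves three structural lemmas about this projection: that $p(P_2^\circ)$ and $p(P_3^\circ)$ meet transversely (Lemma~\ref{lemma:proj}), that they cross at least twice (Lemma~\ref{lemma:n2}, else $P$ is reducible or has $\le 3$ crossings), and that two specific ``bad'' local configurations cannot occur (Lemma~\ref{lemma:no}). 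These force $p(P_2\cup P_3)$ into a single alternating-crossing schematic (Figure~\ref{fig:7}), after which the position of $p(z_{14})$ relative to the complementary regions $A_i,B_j$ yields a diagram with at most four crossings via explicit level-wise isotopies of $P_1,P_3,P_4$. None of these lemmas or the region argument is anticipated by your sketch; ``project generically and simplify'' does not see, for example, the irreducibility/transversality interaction in Lemma~\ref{lemma:proj} or the nontrivial exclusion argument in Lemma~\ref{lemma:no}. So while your strategy is correct, the proposal as written stops precisely where the proof begins.
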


\noindent By combining Theorem \ref{thm:1} and the result in \cite{HO} we have Corollary \ref{coro:1}.
\begin{corollary} \label{coro:1}
$s_L(K) \geq 15$ for every non-trivial knot $K$ except for $3_1$ and $4_1$.
\end{corollary}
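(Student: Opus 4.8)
The plan is to derive Corollary~\ref{coro:1} as a purely formal consequence of Theorem~\ref{thm:1} together with the two facts established in \cite{HO}, namely that $s_L(3_1)=12$ and that $s_L(K)\ge 14$ for every non-trivial knot $K$ other than $3_1$. All of the geometric and combinatorial work has already been spent on proving Theorem~\ref{thm:1}; what remains is merely to organize these statements into the stated lower bound, so I expect no genuine obstacle in this final step.

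First I would fix an arbitrary non-trivial knot $K$ satisfying $K\ne 3_1$ and $K\ne 4_1$, and record the two available inputs. Because $K$ is non-trivial and $K\ne 3_1$, the result of \cite{HO} gives immediately
\[
 s_L(K)\ge 14.
\]
Next I would invoke Theorem~\ref{thm:1} in its contrapositive form: the biconditional $s_L(K)=14\iff K=4_1$, together with the hypothesis $K\ne 4_1$, forces
\[
 s_L(K)\ne 14.
\]

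Finally, I would use that $s_L$ is by definition a positive integer, since it counts the minimal number of line segments in a lattice presentation. Hence the two conclusions $s_L(K)\ge 14$ and $s_L(K)\ne 14$ combine to give $s_L(K)\ge 15$, which is exactly the assertion of the corollary. The only point meriting a second glance is the integrality of $s_L(K)$, but this is immediate from its definition; no further estimate or case analysis is required beyond what Theorem~\ref{thm:1} already supplies.
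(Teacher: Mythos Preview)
Your argument is correct and follows exactly the approach indicated in the paper: the corollary is obtained by combining Theorem~\ref{thm:1} with the results of \cite{HO}, using integrality of $s_L$ to pass from $s_L(K)\ge 14$ and $s_L(K)\ne 14$ to $s_L(K)\ge 15$. Nothing further is needed.
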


The rest of this paper is devoted to the proof of Theorem \ref{thm:1}.

\begin{figure}[h]
\centerline{\epsfbox{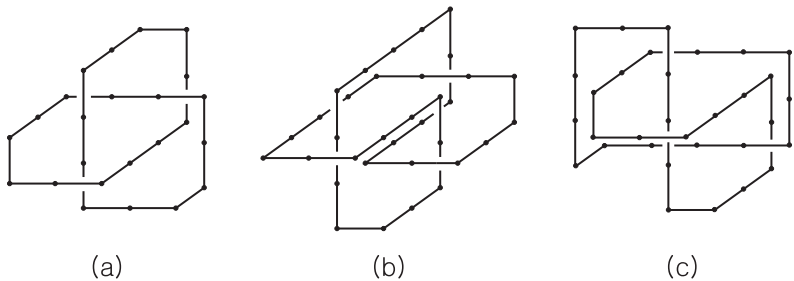}}
\caption{$3_1$ and $4_1$ in the cubic lattice}
\label{fig:1}
\end{figure}

\section{Proof of Main Theorem}

As a preparation for the proof of Theorem \ref{thm:1},
we briefly introduce some definitions and a lemma
which are found in the previous paper of the authors \cite{HO}.

Throughout this paper, a polygon will always mean a polygonal knot in $\mathbb{Z}^3$.
Two polygons are said to be {\em equivalent\/} if they are ambient isotopic in $\mathbb{R}^3$.
A polygon $P$ is called {\em reducible\/} if there is another equivalent polygon
which has fewer sticks.
Otherwise, it is {\em irreducible\/}.

Let $|P|$ denote the number of sticks of a polygon $P$.
A stick in $P$ which is parallel to the $x$-axis({\em resp.} $y$, $z$-axis)
is called an {\em $x$-stick\/}({\em resp.} {\em $y$, $z$-stick}) of $P$,
and $|P|_x$({\em resp.} $|P|_y$, $|P|_x$) denotes
the number of its $x$-sticks({\em resp.} {\em $y$, $z$-sticks }).
Each $y$-stick or $z$-stick lies on a plane whose $x$-coordinate is some integer $k$.
This plane is called an {\em $x$-level\/} $k$.
If $P$ has $n$ $x$-levels, then, without loss of generality,
we may say that these are $x$-levels $1,2,\cdots,n$ like heights.
In particular $x$-levels $1$ and $n$ are considered as boundary $x$-levels.
Note that an $x$-stick whose endpoints lie on $x$-levels $i$ and $j$ has length $|i-j|$.
A polygon $P$ is said to be {\em properly leveled\/}
if each $x$-level ({\em resp.} $y$, $z$-level) contains exactly two endpoints of $x$-sticks
({\em resp.} $y$, $z$-sticks).
Note that a properly leveled polygon $P$ has $|P|_x$ $x$-levels, $|P|_y$ $y$-levels and $|P|_z$ $z$-levels.
Three lemmas in Section 2 of \cite{HO} which are modified are essential in our study.

\begin{lemma} \cite{HO} \label{lem:1}
\begin{enumerate}
\item For a polygon $P$, there is a properly leveled polygon $P'$ equivalent to $P$ with $|P'| = |P|$.
\item Let $P$ be a properly leveled polygon.
If two $x$-sticks of $P$ have their endpoints on the same $x$-levels,
then the knot type of $P$ must be trivial (similarly for the $y$ or $z$-sticks).
\item Let $P$ be a properly leveled irreducible non-trivial polygon.
Then each boundary level contains only one stick or two sticks
which are connected.
Furthermore, if a stick of $P$ has one endpoint on a boundary level,
then this stick has length at least two.
\end{enumerate}
\end{lemma}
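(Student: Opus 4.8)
The plan is to prove the three assertions in turn, exploiting the decomposition of a lattice polygon induced by its $x$-sticks. Since two consecutive sticks of a lattice polygon are never parallel (parallel neighbours would merge into a single stick), the polygon breaks along its $x$-sticks into maximal \emph{resting arcs}, a resting arc being a maximal subarc of constant $x$-coordinate; each consists of $y$- and $z$-sticks and is bounded by the endpoints of the two incident $x$-sticks. In this language every $x$-level carries an even number of $x$-stick endpoints, the $x$-levels meeting a $y$- or $z$-stick are exactly the $x$-coordinates of the resting arcs, and proper leveling in the $x$-direction says precisely that each $x$-level supports exactly one resting arc.

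For assertion (1) I would first rescale the lattice by a large integer factor $N$, replacing each coordinate $c$ by $Nc$; this ambient isotopy changes neither the knot type nor the stick count but opens a gap of width $N$ between consecutive levels. I then normalize one axis at a time. To treat the $x$-direction, I take any $x$-level carrying several resting arcs and translate all but one of them by small integer amounts in the $x$-direction, adjusting the lengths of the two incident $x$-sticks; for translations small compared with $N$ this is an embedded isotopy that creates no new stick and separates the arcs onto distinct $x$-levels. Iterating makes $P$ properly leveled in $x$. Because a translation in the $x$-direction alters only $x$-coordinates, it preserves every $y$- and $z$-coordinate; hence I may afterwards normalize the $y$- and then the $z$-direction in the same way without disturbing the levels already arranged, producing $P'$ with $|P'|=|P|$.

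For assertion (2), suppose $x$-sticks $s_1,s_2$ both join $x$-levels $i$ and $j$, with $i<j$. By proper leveling each of levels $i,j$ carries exactly two $x$-stick endpoints, which must then be the endpoints of $s_1$ and $s_2$; so level $i$ supports a single resting arc $\alpha$ joining $s_1$ to $s_2$ and level $j$ a single resting arc $\beta$ joining them, and both are nondegenerate since $s_1\neq s_2$. Tracing the curve gives $P=\alpha\cup s_1\cup\beta\cup s_2$, with $\alpha$ embedded in the plane $x=i$ and $\beta$ in the plane $x=j$. Projecting along the $x$-axis, the planar arcs $\alpha,\beta$ project to embedded arcs sharing their two endpoints while $s_1,s_2$ project to points, so every crossing is between $\alpha$ and $\beta$; as $\alpha$ lies at smaller $x$ than $\beta$ throughout, $\beta$ is the overpass at each crossing. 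A two-arc diagram in which one arc overpasses at every crossing represents the trivial knot, the over-arc being liftable clear of the plane, so $P$ is trivial. The cases of $y$- and $z$-sticks follow by symmetry.

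For assertion (3) I work at a boundary level, say the minimal $x$-level $1$; the other five boundary levels are symmetric. Proper leveling again yields a single resting arc $\alpha$, from whose two endpoints $x$-sticks $s_1,s_2$ rise into the slab $x\ge 1$, and the decisive feature is that the open half-space $x<1$ is free of $P$. If some boundary stick, say $s_1$, had length one, or if $\alpha$ had three or more sticks, I would seek an equivalent polygon with fewer sticks, contradicting irreducibility. The tool is the elementary corner move that slides a vertex of $P$ across a free unit rectangle, leaving the stick number unchanged but relocating a corner; a suitable sequence of such moves funnels the short stick, or a superfluous corner of $\alpha$, along the boundary arc until it becomes collinear with a neighbouring stick, whereupon the two merge and $|P|$ drops. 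I expect this assertion to be the main obstacle: the issue is not the existence of corner moves but the verification that the required unit rectangles are free of the rest of $P$. The empty half-space below level $1$ clears obstructions on one side, but parts of $P$ lying inside the slab directly above $\alpha$ must be excluded or avoided, and organizing the resulting case analysis on the directions of the sticks incident to the boundary arc, so that every over-long arc or length-one boundary stick is exhibited as reducible, is the technical heart of the proof. Assertions (1) and (2) are comparatively routine once the resting-arc decomposition is available.
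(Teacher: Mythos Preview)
This lemma is not proved in the present paper: it is quoted, with the citation \cite{HO}, from the authors' earlier work, and only its statement is recorded here for use in the proof of Theorem~\ref{thm:1}. There is therefore no in-paper argument to set your proposal against; a genuine comparison would require consulting \cite{HO} directly.

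Assessed on its own terms, your treatment of parts (1) and (2) is correct. The rescale-then-perturb construction for (1) is the standard device, and your resting-arc language makes the embeddedness check for small translations transparent; the observation that an $x$-translation leaves all $y$- and $z$-coordinates fixed, so that the three directions may be normalised in turn, is exactly the point. For (2), the deduction that $P=\alpha\cup s_1\cup\beta\cup s_2$ and the bridge-type unknotting argument are clean and complete.

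For part (3) you have correctly located both the difficulty and the key resource, namely the empty half-space beyond a boundary level. Your proposed mechanism of chaining unit corner moves will work but is heavier than necessary for the first claim: since $P\setminus\alpha$ meets the plane $x=1$ only in the two endpoints of $\alpha$, one may push $\alpha$ wholesale into the half-space $x<1$, replace it there by an $L$-shaped arc (at most two sticks) with the same endpoints, and push back, obtaining an equivalent polygon with strictly fewer sticks whenever $|\alpha|\ge 3$. This global replacement sidesteps the rectangle-by-rectangle freeness checks you anticipate. The second claim, that the incident $x$-sticks have length at least two, is where a genuine case argument is needed (note that part (2) already rules out both having length one, since they would then share their endpoint levels); your caution here is well placed, and the corner-move strategy you outline is a viable route, though again routing through the empty half-space tends to shorten the analysis.
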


Now we prove Theorem \ref{thm:1}.
First of all, the existence of the lattice $4_1$ knots with $14$ sticks
as depicted in Figure \ref{fig:1}-(b) and (c) guarantees that $s_L(4_1) \leq 14$.
Let $P$ be an irreducible nontrivial lattice knot with $|P|\leq 14$.
By Lemma \ref{lem:1}-(1) we can assume that $P$ is properly leveled.
Then, by Lemma \ref{lem:1}-(2) and (3), we have $|P|_x,\;|P|_y,\;|P|_z \geq 4$.
Also it can be assumed that $|P|_x \geq |P|_y \geq |P|_z$.
Hence $(|P|_x, |P|_y, |P|_z)$ is equal to one of $(4,4,4)$, $(5,4,4)$, $(6,4,4)$ and $(5,5,4)$.
In \cite{HO} the case $|P|_y = |P|_z = 4$ was investigated, and the followings were proved;

\begin{enumerate}
\em
\item If $(|P|_x, |P|_y, |P|_z) = (4,4,4)$, then $P$ is $3_1$.
\item $(|P|_x, |P|_y, |P|_z) \neq (5,4,4)$.
\item If $(|P|_x, |P|_y, |P|_z) = (6,4,4)$, then $P$ is $4_1$.
\end{enumerate}

Thus the following lemma completes the proof of Theorem \ref{thm:1}.
Note that $P$ has $14$ sticks only in two cases $(6,4,4)$ and $(5,5,4)$.

\begin{lemma} \label{lem:main}
Let $P$ be a properly leveled irreducible non-trivial polygon.

\noindent If $(|P|_x, |P|_y, |P|_z)=(5,5,4)$, then $P$ is $4_1$.
\end{lemma}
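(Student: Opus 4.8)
The plan is to proceed by an exhaustive combinatorial case analysis on the configuration of sticks in the $(5,5,4)$ case, exactly in the spirit of the $(6,4,4)$ analysis in \cite{HO}. First I would set up coordinates so that the four $z$-levels are $1,2,3,4$, the five $x$-levels are $1,\dots,5$, and the five $y$-levels are $1,\dots,5$, and record the basic constraints from Lemma \ref{lem:1}: each level carries exactly two stick-endpoints of the appropriate type; by part (2) no two $x$-sticks share both endpoints' levels and likewise for $y$- and $z$-sticks; and by part (3) each boundary level ($z$-levels $1$ and $4$, $x$-levels $1$ and $5$, $y$-levels $1$ and $5$) carries either a single stick or two connected sticks, with any stick touching a boundary level having length $\ge 2$. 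Since $|P|=14$ with $5$ $x$-sticks, $5$ $y$-sticks and $4$ $z$-sticks, the polygon is a single cycle alternating in a controlled way, and I would first pin down the cyclic pattern of stick-directions around $P$.

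Next I would exploit the $z$-direction, which has only four levels and hence behaves like the ``short'' direction did in \cite{HO}. The four $z$-sticks occupy exactly two endpoints on each of $z$-levels $1,2,3,4$; by the boundary condition the two endpoints on $z$-level $1$ either belong to one $z$-stick or to two connected $z$-sticks, and the same at $z$-level $4$, and each $z$-stick incident to a boundary level has length $\ge 2$, i.e. spans from level $1$ to level $3$ or level $4$, or from level $2$ or $1$ to level $4$. Together with the no-repeated-level condition this leaves only a small number of combinatorial types for how the four $z$-sticks are distributed among the levels; I would enumerate these, and for each one determine how the $x$-sticks and $y$-sticks must interleave to close up into a single polygon. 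Here the arithmetic that $|P|_x=|P|_y=5$ (odd) forces a specific parity of alternation between $x$- and $y$-sticks along the cycle, which should cut the case list substantially.

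Then, for each surviving combinatorial skeleton, I would reconstruct the knot diagram by projecting (say along the $z$-axis) and reading off the crossings, using the $z$-levels to resolve over/under. In each case I expect to find either that the polygon is reducible (contradicting the hypothesis), or that it is trivial (contradicting non-triviality), or that it is ambient isotopic to $4_1$; one checks triviality/reducibility by exhibiting an explicit isotopy removing a stick, and identifies $4_1$ by computing a simple invariant (the Alexander polynomial, or just counting that the reduced diagram has four crossings with the figure-eight pattern). Ruling out reducibility is subtle because a polygon can look complicated yet still collapse, so I would lean on Lemma \ref{lem:1}-(3) and on length-$1$ stick arguments to force the diagrams into a canonical minimal form before identifying them.

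The hard part will be keeping the case analysis both exhaustive and manageable: the $(5,5,4)$ case has two ``long'' directions rather than one, so the number of interleaving patterns of $x$- and $y$-sticks is larger than in the $(6,4,4)$ case, and I expect many branches. The main obstacle is organizing these branches so that symmetry (the dihedral symmetries of the lattice cube permuting and reflecting the $x$- and $y$-directions, plus reversal of orientation of $P$) is used aggressively to collapse equivalent configurations, and so that each branch terminates cleanly in one of the three outcomes above. A secondary obstacle is the reducibility check in the borderline configurations, where I would need a robust criterion — most likely: if some level contains a stick of length $1$ or if two consecutive sticks can be ``straightened'' past the rest of the polygon, then $P$ is reducible — applied systematically to discard all non-minimal skeletons before the final knot identification.
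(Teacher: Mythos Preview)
Your plan is a legitimate strategy, but it takes a markedly different route from the paper. You propose a direct combinatorial enumeration of all properly leveled $(5,5,4)$ skeletons up to lattice symmetry, followed by diagram-by-diagram identification. The paper instead works almost entirely in the projection $p:\mathbb{R}^3\to\mathbb{R}^2$ onto the $xy$-plane and argues topologically. It first observes that the four $z$-sticks are forced to be exactly $z_{14},z_{13},z_{24},z_{23}$ (so your ``small number of combinatorial types'' for the $z$-sticks is in fact a single type), writes $P_i=P\cap Z_i$, and then proves three auxiliary lemmas about how $p(P^{\circ}_2)$ and $p(P^{\circ}_3)$ can meet: they intersect transversally, they have at least two double points, and two specific local crossing patterns are excluded. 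These lemmas force $p(P_2\cup P_3)$, after a Reidemeister~II cleanup, into a standard chain of regions $A_1,\dots,A_{n-1}$ flanked by two outer regions $B_1,B_2$. A short case split on which region contains $p(z_{14})$, together with isotopies of $P_1,P_3,P_4$ \emph{within their own $z$-levels}, then shows that $P$ admits a diagram with at most four crossings, whence $P$ is $4_1$.

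The payoff of the paper's approach is that it never enumerates $x$- and $y$-stick placements at all: the freedom to isotope each $P_i$ inside its $z$-level absorbs precisely the combinatorics you are bracing to enumerate. Your approach, by contrast, must grind through the interleavings of five $x$-sticks and five $y$-sticks across five levels each, and the reducibility checks you flag as ``the secondary obstacle'' will recur in many branches rather than being handled once. Your route would work if executed exhaustively and has the virtue of being entirely mechanical, but the paper's projection-and-isotopy argument is both shorter and more conceptual, and it is the idea of isotoping within $z$-levels to bound the crossing number that you are missing.
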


The proof of this lemma will be given in the next section.

\section{Proof of Lemma \ref{lem:main}}

Since $P$ is properly leveled and $|P|_z =4$,
it has only four $z$-levels and therefore only four $z$-sticks,
namely $z_{14}$, $z_{13}$, $z_{24}$ and $z_{23}$,
where each $z_{ij}$ denotes the $z$-stick between two $z$-levels $i$ and $j$.
For brevity of notation, let $Z_i$ denote the $z$-level $i$ and
$P_i$ denote the subarc of $P$ which is the intersection of $P$ and $Z_i$ for each $i$.
And $P^{\circ}_i$ will denote the interior of $P_i$.
Note that $P$ is the union of eight arcs which are listed as below along an orientation of $P$;
\[P_1 \rightarrow z_{13} \rightarrow P_3 \rightarrow z_{23}
 \rightarrow P_2 \rightarrow z_{24} \rightarrow P_4 \rightarrow z_{14}. \]

\begin{lemma} \label{lemma:proj}
Let $p: \mathbb{R}^3 \rightarrow \mathbb{R}^2$ be the projection into the $xy$-plane.
Then the intersection between the two simple arcs $p(P^{\circ}_2)$ and $p(P^{\circ}_3)$
should be transverse.
\end{lemma}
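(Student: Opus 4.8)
The plan is to study the structure of the polygon $P$ restricted to the four $z$-levels and argue by contradiction, supposing that $p(P^{\circ}_2)$ and $p(P^{\circ}_3)$ meet non-transversely (or not at all) in the $xy$-plane. First I would record that each $P_i$ is a simple arc lying in the plane $Z_i$, built only from $x$-sticks and $y$-sticks, and that its two endpoints are the feet of the $z$-sticks incident to $Z_i$; in particular $P_2$ joins the feet of $z_{23}$ and $z_{24}$, while $P_3$ joins the feet of $z_{13}$ and $z_{23}$. Since projection to the $xy$-plane is injective on each $Z_i$ separately, the images $p(P_2)$ and $p(P_3)$ are again simple arcs, and the only possible ways they can fail to be transverse are: (i) they are disjoint, or (ii) they overlap along a sub-segment (a shared $x$- or $y$-stick, or part of one), or (iii) an endpoint of one lies in the interior or on the endpoint set of the other. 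I would treat the endpoint of $z_{23}$ in $Z_2$ and in $Z_3$ as a distinguished shared point: $p$ sends these two distinct points of $\mathbb R^3$ to the \emph{same} point of the plane, so $p(P^{\circ}_2)$ and $p(P^{\circ}_3)$ already nearly share that point, and a clean reformulation is that away from this forced coincidence the two open arcs must cross transversally.

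Next I would exploit the properly leveled hypothesis together with $|P|_x = |P|_y = 5$. Each $x$-level and each $y$-level contains exactly two endpoints of the corresponding sticks, so the combinatorics of how the ten sticks making up $P_1, P_2, P_3, P_4$ can be distributed is very restricted; in particular, by Lemma~\ref{lem:1}-(2) no two $x$-sticks (resp. $y$-sticks) of $P$ share both their levels, which forbids exactly the "overlap" degeneracy (ii) above, since a shared projected segment would force such a coincidence of levels for two sticks lying in $Z_2$ and $Z_3$. That disposes of case (ii). For case (iii), an interior-to-endpoint or endpoint-to-endpoint incidence of $p(P_2)$ and $p(P_3)$ (other than at the forced $z_{23}$-point) would mean some point of $P^{\circ}_2$ projects onto an endpoint of $P_3$, i.e. onto the foot of $z_{13}$ or of $z_{23}$; I would chase this through the level bookkeeping and show it again collides with the properly leveled condition or else produces a stick of length zero, which is absurd. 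The remaining possibility, case (i) that the projected arcs are disjoint, is where the real geometry enters: if $p(P_2) \cap p(P_3)$ consisted only of the single forced point, then one could push $P_3$ slightly in the $z$-direction past $Z_2$ and cancel a pair of $z$-sticks, contradicting irreducibility — essentially Lemma~\ref{lem:1}-(3)-type reasoning showing that the subarc between the $z$-levels $2$ and $3$ carries no knotting and can be shortened.

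The main obstacle I expect is case (i): making precise the claim that disjointness of $p(P_2)$ and $p(P_3)$ in the plane lets one reduce $|P|$. The clean way is to observe that $z_{23}$ connects $P_2$ and $P_3$, and that $z_{13}$ and $z_{24}$ attach $P_3$ and $P_2$ to the rest of $P$ which lives in $Z_1 \cup Z_4$; if $p(P^{\circ}_2)$ and $p(P^{\circ}_3)$ do not cross, then the arc $z_{13} \to P_3 \to z_{23} \to P_2 \to z_{24}$ projects to an embedded arc in the $xy$-plane, and a small isotopy collapsing the $z$-extent of $P_3$ onto $Z_2$ (legal in the lattice because there is no projected obstruction) merges $z_{13}$ with $z_{23}$ into a single $z$-stick and likewise removes one turn, contradicting that $P$ is irreducible with $|P|_z = 4$ exactly four $z$-sticks. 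I would need to check that this move keeps the polygon in the lattice and genuinely strictly decreases the stick count, invoking Lemma~\ref{lem:1}-(1) to re-level afterwards if necessary. Once cases (i)–(iii) are all excluded, transversality of $p(P^{\circ}_2) \cap p(P^{\circ}_3)$ follows, completing the proof of Lemma~\ref{lemma:proj}.
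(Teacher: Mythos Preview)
Your overall case split is reasonable, but two of the three branches are mishandled.

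First, case (i) is superfluous here: transversality is a condition on the intersection set, so an empty intersection is vacuously transverse. Disjointness of $p(P^\circ_2)$ and $p(P^\circ_3)$ is not a failure of Lemma~\ref{lemma:proj} at all; it is precisely what Lemma~\ref{lemma:n2} rules out afterwards, and the reduction you sketch (sliding $P_3$ down onto $Z_2$) is exactly the paper's proof of that later lemma, not of this one.

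More seriously, your argument for case (ii) does not work. If a $y$-stick of $P_2$ and a $y$-stick of $P_3$ project to overlapping segments, all you learn is that they lie in the same $x$-level; their $y$-level endpoints may still be different (e.g.\ one runs from $y=1$ to $y=3$, the other from $y=2$ to $y=4$). Lemma~\ref{lem:1}-(2) forbids two $y$-sticks sharing \emph{both} $y$-levels, which is a different hypothesis, so you cannot invoke it here. The paper's treatment of this case is more delicate: from the two sticks sharing an $x$-level, proper leveledness forces the sub-arc $L$ of $P$ joining them to stay in that $x$-level (since the level carries only two $x$-stick endpoints), and one then counts the $z$-sticks that $L$ must contain. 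If $L=z_{23}$ the polygon is reducible; otherwise $L$ swallows $z_{24},z_{14},z_{13}$ and two extra $y$-sticks, leaving the complement with five $x$-sticks among only seven sticks, hence two adjacent $x$-sticks, a contradiction. Your write-up skips this entire counting argument.

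Finally, the genuine point-tangency that the paper treats (its first case) is not an endpoint phenomenon: two interior corners of $p(P_2)$ and $p(P_3)$ coincide, with both arcs turning the same way there. The paper disposes of it by observing that the four sticks meeting at these two vertices force, via proper leveledness, two disjoint connecting arcs each containing more than one of the non-$z_{23}$ $z$-sticks, exceeding the available supply. Your case (iii), phrased in terms of endpoints of the open arcs landing on the other arc, does not cover this configuration, and the vague ``chase through level bookkeeping'' would need to be replaced by this concrete $z$-stick count.
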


\begin{proof}
For contradiction,
firstly we assume that the two arcs meet locally at a single point in a non-transverse manner.
Then the intersection point should be a projection of two vertices of $P$.
See Figure \ref{fig:2}-(a).
The four sticks near these vertices are an $x$-stick and a $y$-stick of $P_2$,
and an $x$-stick and a $y$-stick of $P_3$ whose endpoints on the other sides are $a, c, b,$ and $d$ respectively.
Because of the proper leveledness of $P$,
the two vertices $a$ and $b$ should be connected by a portion of $P$
which is contained in a single $y$-level.
In fact the connecting arc should contain more than one of the $z$-sticks
which are $z_{24}$, $z_{14}$, and $z_{13}$.
Similarly for the connection of $c$ and $d$, we need more than one $z$-stick,
which is a contradiction.

Now assume that their intersection contains a line segment.
Without loss of generality we may assume that the related two sticks are $y$-sticks.
Because of the proper leveledness of $P$, the two $y$-sticks are connected by a portion of $P$,
namely $L$, which is contained in a single $x$-level, that is,
$L$ includes only $y$-sticks and $z$-sticks.
If $L$ contains $z_{23}$, then it should be $z_{23}$ itself.
This implies that the number of sticks of $P$ can be reduced by $1$ as depicted in Figure \ref{fig:2}-(b),
contradicting the irreducibility of $P$.
Thus $L$ consists of three $z$-sticks $z_{24}$, $z_{14}$ and $z_{13}$,
and two more $y$-sticks between them alternately.
Now consider a connected portion of $P$ which is obtained by deleting $L$ and
two $y$-sticks attached to $L$.
The resulting portion consists of one $z$-stick, one $y$-stick and five $x$-sticks.
Thus some $x$-sticks of $P$ are adjacent, which is a contradiction.
\end{proof}

\begin{figure}[h]
\centerline{\epsfbox{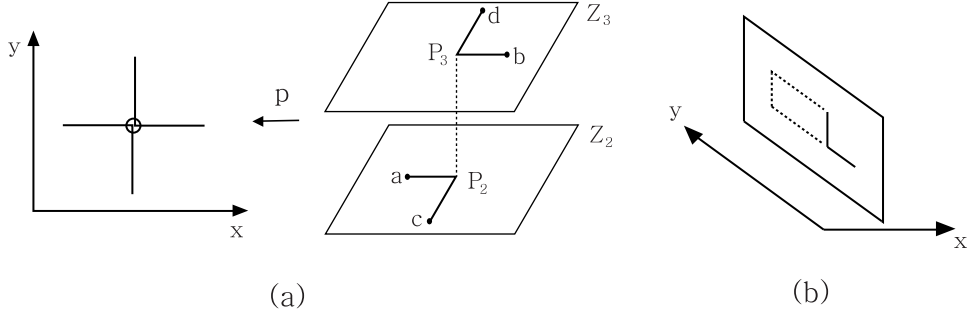}}
\caption{deleting non-transverse intersections}
\label{fig:2}
\end{figure}

\begin{lemma} \label{lemma:n2}
$p(P^{\circ}_2 \cup P^{\circ}_3)$ has at least two transverse double points.
\end{lemma}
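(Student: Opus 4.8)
The plan is to analyze the knot type of $P$ by viewing $P$ as built from the two arcs $P_2$ and $P_3$ (which carry all the ``interesting'' crossing information) together with the four $z$-sticks and the boundary arcs $P_1, P_4$. First I would observe that $P_1$ and $P_4$ each lie in a single $z$-level and, since $|P|_z = 4$, consist only of $x$-sticks and $y$-sticks; moreover, by Lemma \ref{lem:1}-(3) the boundary $z$-levels $Z_1$ and $Z_4$ contain at most two sticks each, so $P_1$ and $P_4$ are short arcs. The key point is that in the projection $p$ to the $xy$-plane, the arcs $p(P_1), p(P_4)$ and the images of the four $z$-sticks $p(z_{ij})$ contribute no crossings essential to a nontrivial knot type on their own: the $z$-sticks project to points, and $P_1, P_4$ can be pushed off. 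Hence all crossings of a regular projection of $P$ arise from self-intersections of $p(P_2 \cup P_3)$, and by Lemma \ref{lemma:proj} every such intersection is a transverse double point.

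Next I would argue by contradiction: suppose $p(P^{\circ}_2 \cup P^{\circ}_3)$ has at most one transverse double point. If it has zero double points, then $p(P_2 \cup P_3)$ is embedded, and together with the remark above, $P$ admits a crossingless (or nearly crossingless) diagram, forcing $P$ to be trivial, contradicting the hypothesis. If it has exactly one double point, I would show that the corresponding knot diagram of $P$ has at most one crossing after accounting for the contributions of $P_1$, $P_4$, and the $z$-sticks — and a knot diagram with at most one crossing presents the unknot, again a contradiction. The self-intersections of $p(P_2 \cup P_3)$ split into three types: double points of $p(P^{\circ}_2)$ with itself, of $p(P^{\circ}_3)$ with itself, and of $p(P^{\circ}_2)$ with $p(P^{\circ}_3)$; I would note that $P_2$ and $P_3$ are each arcs in a single $z$-level with a bounded number of sticks (at most roughly half of $|P|_x + |P|_y = 10$ sticks distributed among $P_1, P_2, P_3, P_4$), so the combinatorial possibilities are quite limited.

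The main obstacle, I expect, is bookkeeping the crossing contributions carefully enough to rule out the ``exactly one double point'' case: one must be sure that a single transverse double point of $p(P^\circ_2 \cup P^\circ_3)$ cannot, in combination with the geometry of the two boundary arcs and the way the $z$-sticks interleave the $z$-levels $1,2,3,4$, yield a genuinely knotted diagram. The cleanest route is probably to show directly that if $p(P_2)$ and $p(P_3)$ together have at most one crossing, then one of the four subarcs $P_2$ or $P_3$ can be isotoped (within its $z$-level, extended slightly) to remove that crossing, so that $P$ lies in a ``book'' of very few half-planes and is therefore unknotted. I would lean on the irreducibility and proper-leveledness hypotheses here exactly as in the proof of Lemma \ref{lemma:proj} — any attempt to build a nontrivial $P$ with too few crossings will produce either adjacent parallel sticks or two $x$-sticks (or $y$-sticks) on the same levels, both forbidden by Lemma \ref{lem:1}.
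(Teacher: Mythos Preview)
Your zero-crossing case is fine in spirit (the paper handles it slightly differently, by pushing $P_3$ down to $Z_2$ and deleting $z_{23}$ to reduce the stick count, contradicting irreducibility rather than nontriviality, but your version also works).

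The genuine gap is in the one-crossing case. Your claim that ``the corresponding knot diagram of $P$ has at most one crossing after accounting for the contributions of $P_1$, $P_4$'' is not correct, and the suggested fix --- isotoping $P_2$ or $P_3$ within its $z$-level to remove the crossing --- does not work in general. Here is what actually happens. The arc $p(P_2\cup P_3)$ runs from $p(z_{24})$ through $p(z_{23})$ to $p(z_{13})$ with exactly one self-crossing, so it cuts the plane into two regions $D_1$, $D_2$, and the two free endpoints $p(z_{13})$, $p(z_{24})$ lie in the same region, say $D_1$. Now $P_1$ connects $p(z_{13})$ to $p(z_{14})$ and $P_4$ connects $p(z_{24})$ to $p(z_{14})$; if $p(z_{14})\in D_2$, then \emph{each} of $p(P_1)$ and $p(P_4)$ is forced to cross $p(P_2\cup P_3)$ at least once, no matter how you isotope them on their levels. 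The best diagram you can obtain therefore has three crossings, not one, and a three-crossing diagram can represent $3_1$.

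What rescues the argument --- and what your proposal omits --- is the irreducibility hypothesis together with the known value $s_L(3_1)=12$: since $|P|=14$, if $P$ were $3_1$ it would be reducible. So in the one-crossing case the conclusion is ``$P$ is trivial or $3_1$,'' and both alternatives contradict the standing hypotheses. Your sketch never invokes the possibility of $3_1$ or this stick-count comparison, so as written it does not close the case.
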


\begin{proof}
First assume that it has no such point.
Pull down $P_3$ into the plane $Z_2$ along the $z$-axis.
To be a new polygon, remove $z_{23}$,
and shorten $z_{13}$ so that two ends of a new $z$-stick lie on $Z_1$ and $Z_2$.
The resulting polygon represents the same knot as $P$, but has fewer sticks.
This is a contradiction to the irreducibility of $P$.

Now assume that it has only one transversal double point.
$p(P_2 \cup P_3)$ separates the $xy$-plane
into two regions $D_1$ and $D_2$ one of which is unbounded.
Then $p(z_{13})$ and $p(z_{24})$ lie inside the same region, say $D_1$.
If another point $p(z_{14})$ lies inside $D_1$, we can isotope two subarcs $P_1$ and $P_4$ of $P$
on the planes $Z_1$ and $Z_4$ respectively
so that their projections do not meet each other and also $p(P_2 \cup P_3)$
except for their endpoints.
This implies that the knot $P$ has a diagram with only one crossing,
that is, $P$ is trivial.
If $p(z_{14})$ lies inside $D_2$, we can similarly isotope $P_1$ and $P_4$
so that their projections do not meet each other and each projection meets transversely
$p(P_2 \cup P_3)$ exactly once.
Thus the knot $P$ has a diagram with $3$ crossings, that is,
$P$ is trivial or $3_1$.
Both cases contradict the irreducibility of $P$.
\end{proof}

Now we observe $p(P_2 \cup P_3)$ on the square lattice $\mathbb{Z}^2$ of the $xy$-plane
which is $(\mathbb{R} \times \mathbb{Z}) \cup (\mathbb{Z} \times \mathbb{R})$,
more precisely on $([1,5] \times \{1,2,3,4,5\}) \cup (\{1,2,3,4,5\} \times [1,5])$.
Also we indicate an over/under-crossing at each crossing point
so that $p(P_3)$ goes over $p(P_2)$.
Suppose that there is a pair of subarcs of $p(P_2)$ and $p(P_3)$
so that they meet exactly twice,
one arc consists of only one stick and the other consists of three sticks,
and the region bounded by these two subarcs has no place to put a $z$-stick
in its interior as in Figure \ref{fig:3}.
Then we isotope $P$ by using a Reidermeister move II and
rearrange the related $x$ or $y$-levels so that $P$ is still properly leveled.
From now on we assume that there is no such pair of subarcs.

\begin{figure}[h]
\centerline{\epsfbox{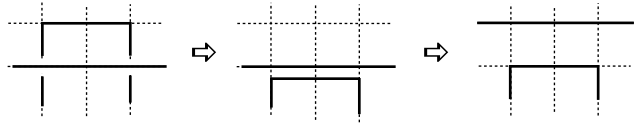}}
\caption{Reidermeister move II to simplify}
\label{fig:3}
\end{figure}

\begin{lemma} \label{lemma:no}
$p(P^{\circ}_2 \cup P^{\circ}_3)$ does not contain any of two types of pairs of subarcs
as in Figure \ref{fig:4}.
\end{lemma}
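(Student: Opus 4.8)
The plan is to proceed exactly as in the preceding lemmas: assume for contradiction that one of the two pictured configurations occurs inside $p(P^{\circ}_2\cup P^{\circ}_3)$, and then either reduce the stick number (contradicting irreducibility) or force the knot type to be trivial or $3_1$ (again contradicting that $P$ is an irreducible $4_1$-candidate in the $(5,5,4)$ case). The two "forbidden types" in Figure \ref{fig:4} are presumably the remaining local pictures of how a short subarc of $p(P_2)$ and a short subarc of $p(P_3)$ can interact that were \emph{not} already eliminated by Lemmas \ref{lemma:proj}, \ref{lemma:n2}, and the Reidemeister~II normalization preceding the statement: namely, a pair of subarcs meeting twice but bounding a bigon whose interior \emph{does} contain room for exactly one $z$-stick endpoint (so the naive R-II move is blocked), and a pair of subarcs meeting three times in an alternating zig-zag. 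For each, the counting constraints $|P|_x=|P|_y=5$, $|P|_z=4$ and proper leveledness will be the engine.

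For the first type I would argue as follows. A bigon in $p(P_2\cup P_3)$ bounded by one stick on one side and three sticks on the other, whose interior contains a place for a $z$-stick, can still be destroyed by a Reidemeister~II move provided we can relocate that offending $z$-stick's projected endpoint out of the bigon first; so the heart of the matter is to show that this relocation is always possible, or else that the relocation cannot be done precisely because the other two $z$-sticks and the arcs $P_1,P_4$ are pinned in a way that forces the diagram of $P$ to have at most three crossings. Concretely, there are only four $z$-sticks $z_{14},z_{13},z_{24},z_{23}$, so at most one of the four projected $z$-points $p(z_{ij})$ can sit in the bigon's interior; track which one it is, slide $P_1$ and/or $P_4$ on their levels (as was done in the proof of Lemma \ref{lemma:n2}) to push that point across the short side of the bigon, and then apply R-II. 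If every such slide is obstructed, the obstruction itself says that $p(P_1)$ and $p(P_4)$ each meet $p(P_2\cup P_3)$ in at most one transverse point, whence $P$ has at most $1+1+(\text{crossings of }P_2\text{ with }P_3)$ crossings; combined with Lemma \ref{lemma:n2} this caps the crossing number at $3$ and makes $P$ trivial or $3_1$.

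For the second type (the triple zig-zag) I would use a pure stick-count contradiction in the spirit of the second half of the proof of Lemma \ref{lemma:proj}. A subarc of $p(P_3)$ meeting a subarc of $p(P_2)$ three times in an alternating pattern forces a certain number of $x$- and $y$-sticks to be consumed inside the pattern; since $P$ has exactly five $x$-sticks, five $y$-sticks, and the four arcs $P_1,\dots,P_4$ together with the four $z$-sticks must still close up, I would show the remaining budget is insufficient — typically one ends up with two $x$-sticks (or two $y$-sticks) forced to be adjacent, or a level forced to contain more than two stick-endpoints, violating proper leveledness via Lemma \ref{lem:1}. Alternatively, if the zig-zag can be partially undone by an R-II move after relocating a $z$-point, that again reduces $|P|$.

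The main obstacle I expect is the bookkeeping in the first case: enumerating which of $p(z_{14}),p(z_{13}),p(z_{24}),p(z_{23})$ can lie in the critical bigon, and verifying in each subcase that either the R-II move can be unblocked by an admissible isotopy of $P_1$ and $P_4$ that keeps $P$ properly leveled, or the blockage forces a $\le 3$-crossing diagram. This is essentially a finite but somewhat delicate case analysis of planar configurations on the $5\times5$ grid, and one must be careful that the isotopies used to move $P_1$, $P_4$, or the $z$-points do not secretly increase the stick number or destroy proper leveledness; appealing carefully to Lemma \ref{lem:1}-(3) about boundary levels and minimum stick lengths should keep this under control. Once both types are excluded, the diagram $p(P_2\cup P_3)$ is constrained enough that, together with Lemma \ref{lemma:n2}, the only surviving possibility is the standard figure-eight template, completing the identification $P\cong 4_1$.
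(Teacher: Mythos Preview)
Your guess about what the two forbidden configurations in Figure~\ref{fig:4} are is off, and this sends the whole plan in the wrong direction. Neither type is a small bigon with a single trapped $z$-point. From the paper's argument one can reconstruct that in both types $Q$ denotes a pair of subarcs of $p(P_2)$ and $p(P_3)$ meeting on their end sticks, and the first step is a global stick-budget count: since crossings pair an $x$-stick with a $y$-stick, $|Q|$ is even; since $P_1,P_4$ each carry one or two sticks on the boundary $z$-levels (Lemma~\ref{lem:1}-(3)), $P_2\cup P_3$ has at most eight sticks; hence $|Q|\in\{6,8\}$. Type-1 turns out to force $|Q|=8$, so $Q$ exhausts $P_2\cup P_3$ and one component of $P-Q$ is literally the single stick $z_{23}$; type-2 contains a closed ``circle'' in $p(Q)$ separating the endpoints of the two subarcs, which rules out $|Q|=8$ and forces $|Q|=6$.

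The paper then finishes by brute enumeration, not by isotopy tricks: two possible drawings of $p(P_2\cup P_3)$ for type-1 and four for type-2, in each of which the positions of $p(z_{23}),p(z_{13}),p(z_{24})$ are pinned down and every admissible location for $p(z_{14})$ (or every admissible routing of the short connecting arcs through a forced lattice point) is checked to be either geometrically impossible or to make $P$ reducible.

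Your proposed mechanism for ``type~1'' --- push the offending $z$-point out of a bigon and then perform R-II, or else bound the crossing number by $3$ --- does not engage with the actual type-1, which is an 8-stick pattern rather than a removable bigon; sliding $P_1,P_4$ gives no obvious $\le 3$-crossing diagram there. Your stick-count idea for ``type~2'' is closer in spirit, but the contradiction the paper obtains is reducibility of $P$ via a forced routing, not adjacency of two like-direction sticks or a proper-leveledness violation. The concrete missing ingredient is the observation $|Q|\in\{6,8\}$ together with the consequent finite placement analysis of the $z$-sticks; without that, the argument does not close.
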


\begin{figure}[h]
\centerline{\epsfbox{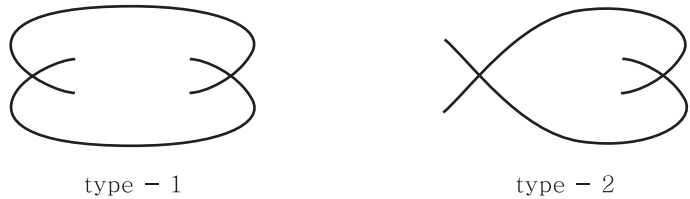}}
\caption{unwanted case}
\label{fig:4}
\end{figure}

\begin{proof}
Assume for contradiction that it contains a pair of subarcs of type-1 or 2.
To simplify the proof, we also assume that both subarcs of the pair
intersect each other on their end sticks.
Let $Q$ be the pair of subarcs of $P_2 \cup P_3$ whose projection is the previous pair
of subarcs of type-1 or 2.

First we will show that $Q$ consists of either six or eight sticks.
Each subarc is an array of $x$-sticks and $y$-sticks connected alternately.
Two sticks producing a double point are exactly one $x$-stick and one $y$-stick.
Then obviously $Q$ uses the same number of $x$-sticks and $y$-sticks,
so an even number of sticks.
Clearly a $Q$ of either type needs at least six sticks.
Also the number of sticks of $P_2 \cup P_3$ is at most eight
because $P$ has exactly four $z$-sticks,
and each of $P_1$ and $P_4$ on the boundary levels consists of either one or two sticks.

Now consider the other two subarcs of $P-Q$.
One subarc consists of $z_{23}$ and some $x$ or $y$-sticks, and
the other consists of $z_{13}$, $z_{14}$, $z_{24}$ and some $x$ or $y$-sticks.
In the latter case we need at least two $x$ or $y$-sticks
because any two $z$-sticks can not be adjoined.
Note that if $Q$ has eight sticks, then $P-Q$ consists of exactly one $x$-stick,
one $y$-stick and four $z$-sticks. Thus one component of $P-Q$ is just one stick $z_{23}$.

Without loss of generality we say $|P_3| \leq |P_2|$.
First assume that $Q$ is of type-1.
Then it consists of eight sticks.
Since one subarc of $P-Q$ is just $z_{23}$,
$p(P_2 \cup P_3)$ is one of the two cases as drawn in Figure \ref{fig:5}.
As mentioned in the statement before Lemma \ref{lemma:no},
we disregard the cases producing such a pair of subarcs as in the leftmost figure in Figure \ref{fig:3}.
In each figure $z_{23}$, $z_{13}$ and $z_{24}$ are placed at the points $a$, $b$ and $c$, respectively.
Furthermore $p(P_3)$ is the subarc from $a$ to $b$
and $p(P_2)$ is the arc from $a$ to $c$.
In both cases $P_1$ and $P_4$ are just one stick.
Thus $z_{14}$ must be placed at either $d$ or $e$ in the left case,
and either $f$ or $g$ in the right case.
But $z_{14}$ can not be placed at $d$, $e$, and $f$.
Also if $z_{14}$ is placed at $g$ in the right case,
then $P$ is reducible, a contradiction.

\begin{figure}[h]
\centerline{\epsfbox{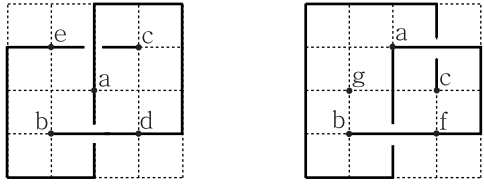}}
\caption{realizing type-1 case}
\label{fig:5}
\end{figure}

Now assume that $Q$ is of type-2.
If $Q$ consists of eight sticks, one subarc of $P-Q$ is just $z_{23}$.
This means that $p(P_2 \cup P_3)$ is obtained from $p(Q)$ by adding one point $p(z_{23})$.
But this is impossible because two end points of each subarc of $p(Q)$ lie on
different sides of the circle in $p(Q)$.
Therefore $Q$ consists of six sticks,
and so $p(P_2 \cup P_3)$ is one of the four cases as drawn in Figure \ref{fig:6}.
In each figure the arc from $a$ to $b$ is a subarc of $p(P_3)$
and the arc from $c$ to $d$ is a subarc of $p(P_2)$.
Since $Q$ consists of $3$ $x$-sticks and $3$ $y$-sticks,
$P-Q$ consists of $2$ $x$-sticks, $2$ $y$-sticks and $4$ $z$-sticks.
In each of the four figures, if one subarc of $P-Q$ connects $a$ and $c$,
it must consist of one $x$-stick, one $y$-stick and some $z$-sticks
since we also need at least one $x$-stick and one $y$-stick to connect $b$ and $d$.
Therefore this subarc must go through the point $e$, so $P$ is reducible.
Thus one subarc of $P-Q$ should run from $a$ to $d$, and also it must go through the point $f$.
There are two possibilities.
This subarc is either consisting of $3$ sticks
which are an $x$-stick, a $y$-stick and $z_{23}$ in this order
or consisting of $5$ sticks
which are $z_{13}$, an $x$-stick, $z_{14}$, a $y$-stick and $z_{24}$ in this order.
In both cases $P$ is reducible.
\end{proof}

\begin{figure}[h]
\centerline{\epsfbox{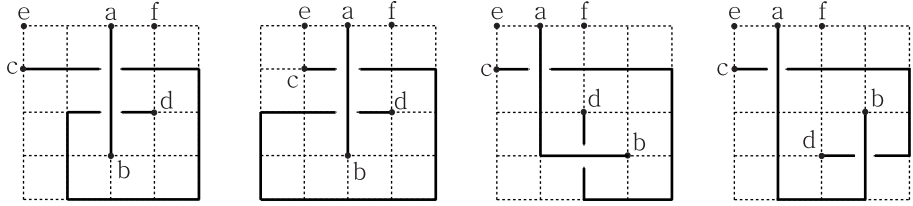}}
\caption{realizing type-2 case}
\label{fig:6}
\end{figure}

Now we prove Lemma \ref{lem:main}.
By Lemma \ref{lemma:no}, $p(P_2 \cup P_3)$ after deleting an arc portion of itself near $p(z_{23})$
is of the form with $n$ crossings as in Figure \ref{fig:7}-(a).
Let $A_1, \dots , A_{n-1}$ be the $n-1$ regions bounded by these two arcs.
Now draw the whole $p(P_2 \cup P_3)$ by adding the deleted portion.
Since $p(z_{23})$ should be located in the unbounded region outside
$A_1 \cup \cdots \cup A_{n-1}$,
the added arc divides the region into two regions $B_1$ and $B_2$,
and the two points $p(z_{13})$ and $p(z_{24})$ lie in $B_1 \cup B_2$.
Note that any $B_i$ which contains $p(z_{13})$ or $p(z_{24})$ is adjacent to every $A_j$.
Figure \ref{fig:7}-(b) illustrates a specific example of this addition where
both $p(z_{13})$ and $p(z_{24})$ lie in $B_1$ and $n$ is equal to 3.

\begin{figure}[h]
\centerline{\epsfbox{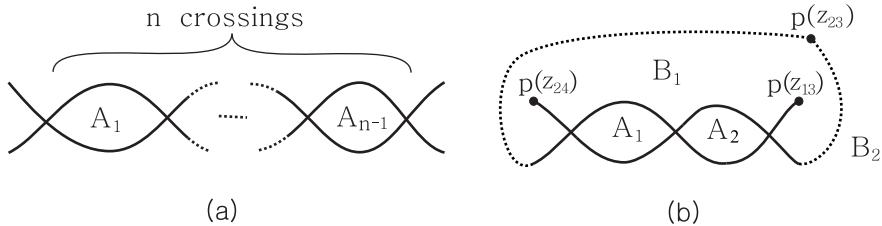}}
\caption{realizing $p(P_2 \cup P_3)$}
\label{fig:7}
\end{figure}

We consider the position of $p(z_{14})$.
First suppose that $p(z_{14})$ lies in either $B_1$ or $B_2$.
Then, as illustrated in Figure \ref{fig:8},
we can isotope each of the subarcs $P_1$, $P_3$ and $P_4$ along its respective $z$-level
so that $p(P^{\circ}_3)$ meets $p(P^{\circ}_2)$ in at most one point,
and each of $p(P^{\circ}_1)$ and $p(P^{\circ}_4)$ meets $p(P^{\circ}_2 \cup P^{\circ}_3)$
in at most one point.
The latter is possible because $B_1$ and $B_2$ are adjacent to each other.
This means that $P$ has a diagram with at most $3$ crossings, a contradiction.

\begin{figure}[h]
\centerline{\epsfbox{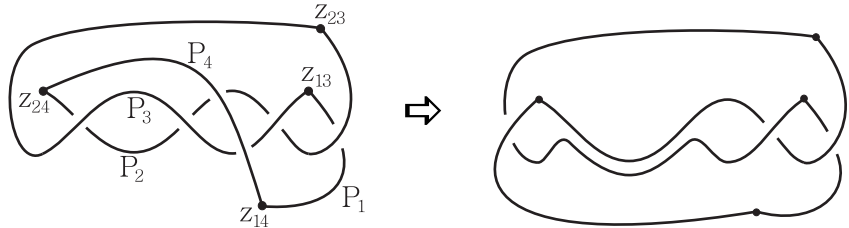}}
\caption{when $p(z_{14})$ lies in either $B_1$ or $B_2$}
\label{fig:8}
\end{figure}

Therefore $p(z_{14})$ lies in one of $A_i$'s, say $A_k$.
Then similarly we can isotope each of the subarcs $P_1$, $P_3$ and $P_4$
so that $p(P^{\circ}_3)$ meets $p(P^{\circ}_2)$ in at most two points
because $p(z_{14})$ is inside $A_k$,
and each of $p(P^{\circ}_1)$ and $p(P^{\circ}_4)$ meets $p(P^{\circ}_2 \cup P^{\circ}_3)$
in at most one point because $A_k$ is adjacent to any $B_i$ which contains $p(z_{13})$ or $p(z_{24})$.
Therefore $P$ has a diagram with at most $4$ crossings.
This implies $P$ should be $4_1$ because $3_1$ would be reducible.

This completes the proof.

\end{document}